\newcommand{\mO}{{\mathcal{O}}}
\newcommand{\mU}{{\mathcal{U}}}
\newcommand{\mV}{{\mathcal{V}}}
\newcommand{\PP}{\mathbb{P}}
\newcommand{\los}[2]{{\mathsf{#1}}\!\not{\!\uparrow}\,\mathsf{G}_{\operatorname{#2}}}
\newcommand{\sfin}{{\mathsf{S}_{\textnormal{fin}}}}
\newcommand{\sone}{{\mathsf{S}_{\textnormal{1}}}}
\newcommand{\playerone}{\mathsf{Player~I}}
\newcommand{\playertwo}{\mathsf{Player~II}}
\newcommand{\gfin}{\mathsf{G}_{\operatorname{fin}}}
\newcommand{\gone}{\mathsf{G}_{\operatorname{1}}}
\newcommand{\opV}{\mathsf{V}}
\newcommand{\Sz}{Szewczak}
\newtheorem{theorem}{Theorem}[section]
\newtheorem{lemma}[theorem]{Lemma}
\newtheorem{proposition}[theorem]{Proposition}
\newtheorem{example}[theorem]{Example}
\newtheorem{question}[theorem]{Question}
\begin{document}
\title{Pointless proofs of the Menger and Rothberger games}

\author[R. M. Mezabarba]{Renan M. Mezabarba}
\address{Centro de Ci\^encias Exatas,
Universidade Federal do Espírito Santo,
Vit\'oria, ES, 29075-910, Brazil}
\email{renan.mezabarba@ufes.br}


%
\keywords{Menger property, Rothberger property, selection principles, lattices, pointless topology}

\subjclass[2020]{54C65, 91A44, 54H12.}

\begin{abstract}
We present proofs of Hurewicz's and Pawlikowski's Theorems concerning topological games, but in the context of lattices, in which one cannot argue with points.
\end{abstract}

\maketitle

\section*{Introduction}

The framework of topological games and selection principles, the latter introduced by Scheepers~\cite{Scheep1996}, have been a fruitful source for both questions and answers in the field, as can be seen in the surveys~\cite{sakaischeepers,tsabanextravaganza}. We refer the reader to the article~\cite{AURICHI2019305}, by Aurichi and Dias, which presents  an introduction to the subject, as well as the main notations we follow along this work.
 
In an attempt to extend some results regarding selective variations of tightness in the context of $C_p$-theory \cite{AURICHI2019187} to the realm of topological groups, the following two important theorems about open covers appeared to be relevant:

\begin{itemize}
\item \textbf{Hurewicz's Theorem~\cite{oldHur}.} A topological space satisfies $\sfin(\mO,\mO)$ if, and only if, $\playerone$ does not have a winning strategy in the game $\gfin(\mO,\mO)$;\medskip
\item \textbf{Pawlikowski's Theorem~\cite{Pawlikowski1994}.} A topological space satisfies $\sone(\mO,\mO)$ if, and only if, $\playerone$ does not have a winning strategy in the game $\gone(\mO,\mO)$.
\end{itemize}

Roughly speaking, our original goal was to settle the same equivalences stated above but replacing the family $\mO$, of open covers of a topological space, with the set $\Omega_x$ whose elements are those subsets of a topological group containing the point $x$ in their closures. Although Hurewicz's and Pawlikowski's Theorems are not directly related to this problem, their proofs could provide useful insights. This ended up taking us to the work of \Sz~and Tsaban~\cite{Tsaban}, in which the authors present conceptual proofs for both the results in such a way that the role of each hypothesis becomes very clear.

Even though the analysis made so far about their proofs has not helped in our original problem, we have come to an unwittingly disclosure: both proofs can be carried out in the topology of the space rather than in the space itself, meaning that the results are of an order-theoretic nature. In other words, Hurewicz's and Pawlikowski's Theorems can be stated and proved in the context of \emph{Pointless Topology}, in the sense of Johnstone~\cite{johnstone1983,Johnstone}. The purpose of this work is to present this generalization, based on the proofs in~\cite{Tsaban} and in a few basic concepts of pointless topology, which we shall recall along the next sections\footnote{For a more systematic treatment of pointless topology, we refer the reader to~\cite{frames,Johnstone}.}.

\section{The Menger game played on nice lattices}

Let $(\PP,\leq)$ be a \emph{lattice}, i.e., a poset such that for every $a,b\in\PP$ there are $\sup\{a,b\}$ and $\inf\{a,b\}$, usually denoted by $a\vee b$ and $a\wedge b$, respectively the \emph{join} and the \emph{meet} of $a$ and $b$. For a fixed element $p\in\PP$, let us denote by $\mathsf{V}_p$ the family of all subsets $A$ of $\PP$ such that $\sup A=p$.

The family $\opV_p$ may be thought as a generalization of the family of open covers of a topological space. Indeed, if $(X,\tau)$ is a topological space, then $(\tau,\subseteq)$ is a lattice\footnote{Actually, it is more than a lattice: it is a frame, as we shall recall soon.} such that $\mU\subseteq \tau$ is an open cover for $X$ if, and only if, $\sup\mU=X$. Thus, with the notation of the previous paragraph, the family $\mathcal{O}_X$ of the open covers of $X$ coincides with the family $\opV_X$. In this sense, the main task of this section is to discuss the proof of the following theorem, which is a generalization of Hurewicz's Theorem.



\begin{theorem}\label{main.theo1}
Let $\PP$ be a lattice with enough prime elements. For every element $p\in\PP$, $\sfin(\mathsf{V}_p,\mathsf{V}_p)$ holds if, and only if, $\los{I}{fin}(\mathsf{V}_p,\mathsf{V}_p)$ holds.
\end{theorem}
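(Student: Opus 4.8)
The plan is to run both implications of Hurewicz's equivalence through a single order-theoretic dictionary supplied by the prime elements. Since $\PP$ has enough prime elements, every element below $p$ is the meet of the primes above it, so for any $B\subseteq\PP$ all of whose members satisfy $b\le p$ one has $\sup B=p$ if and only if, for every prime $q$ with $p\not\le q$, some $b\in B$ satisfies $b\not\le q$. I will read such a prime $q$ as a \emph{point to be covered} and the relation $b\not\le q$ as ``$b$ covers the point $q$''. This is the only place where points would be used in the classical argument, and the hypothesis of enough primes is exactly what legitimizes the reading. With it, $B\in\mathsf{V}_p$ means precisely that $B$ covers every point of $p$, and a play of $\gfin(\mathsf{V}_p,\mathsf{V}_p)$ is won by $\playertwo$ exactly when the union of the finite moves covers every point of $p$.

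The easy implication is the contrapositive of $\los{I}{fin}(\mathsf{V}_p,\mathsf{V}_p)\Rightarrow\sfin(\mathsf{V}_p,\mathsf{V}_p)$. If $\sfin(\mathsf{V}_p,\mathsf{V}_p)$ fails, fix a sequence $(A_n)_{n\in\omega}$ in $\mathsf{V}_p$ witnessing this, so that no choice of finite $F_n\subseteq A_n$ gives $\sup\bigcup_n F_n=p$. Then the strategy for $\playerone$ that plays $A_n$ at round $n$, ignoring $\playertwo$'s responses, is winning, since by construction the resulting union never reaches $p$; hence $\playerone$ has a winning strategy.

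For the hard implication, $\sfin(\mathsf{V}_p,\mathsf{V}_p)\Rightarrow\los{I}{fin}(\mathsf{V}_p,\mathsf{V}_p)$, I argue by contraposition: from a strategy $\sigma$ for $\playerone$ I will use $\sfin(\mathsf{V}_p,\mathsf{V}_p)$ to produce a play defeating $\sigma$. First I unfold $\sigma$ into a tree of partial plays. After reducing to countable covers (the lattice analogue of restricting to a Lindel\"of setting) and fixing an enumeration of each cover that $\sigma$ prescribes, I observe that $\playertwo$ may as well answer with an initial segment of that enumeration, since a larger finite move can only help to reach $p$; thus the legal responses at each node are indexed by a single natural number, and the tree of $\sigma$-consistent partial plays becomes a copy of $\omega^{<\omega}$ carrying a cover $\sigma(s)\in\mathsf{V}_p$ at each node $s$. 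As $\omega^{<\omega}$ is countable, collecting one cover per node gives a countable sequence in $\mathsf{V}_p$, to which $\sfin(\mathsf{V}_p,\mathsf{V}_p)$ applies, returning finite selections whose total union covers every point of $p$.

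The crux, which I expect to be the main obstacle, is that this single application of $\sfin$ does not by itself localize the covering to one branch: it only guarantees that the union over \emph{all} nodes covers every point of $p$, whereas a play follows a single branch and the selected elements may be scattered across pairwise incomparable nodes. To repair this I will not apply $\sfin$ once globally but build the witnessing sequence recursively, so that the finite selection handed back at round $n$ can be read as $\playertwo$'s move at the $n$-th step of a branch under construction; threading these into one infinite branch is a fusion argument, and arranging the bookkeeping so that the branch's \emph{own} moves, rather than the whole tree's, cover $p$ is the delicate point. Here enough primes enters a second time as the certifying device: if the moves along the constructed branch failed to reach $p$, there would be a prime $q$ with $p\not\le q$ dominating all of them, and feeding this uncovered point back into the construction contradicts the covering guaranteed by $\sfin$. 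Once a branch on which $\playertwo$ wins is produced, it yields a $\sigma$-consistent play defeating $\sigma$, so $\playerone$ has no winning strategy, as required.
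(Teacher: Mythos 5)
Your dictionary via prime elements is correct (given enough primes, $\sup B=p$ iff every prime $q$ with $p\not\leq q$ is ``missed'' by some $b\in B$), your easy direction is the standard one and matches the paper, and your reduction of a strategy to a tree $\{\sigma(s):s\in\omega^{<\omega}\}$ of countable covers with single-element (initial-segment) responses is exactly the paper's notion of a \emph{nice strategy}. But at the point you yourself call the crux, the proposal stops being a proof. The ``repair'' you describe is circular: $\sfin(\opV_p,\opV_p)$ is a statement about a sequence of covers fixed \emph{in advance}, so if the cover fed in at round $n+1$ is to depend on the finite selection returned at round $n$ (which is what ``the selection handed back at round $n$ can be read as $\playertwo$'s move at the $n$-th step of a branch under construction'' requires), there is no sequence to which $\sfin$ can be applied; and applying $\sfin$ to one cover at a time is vacuous, since any finite subset of a single cover is a legal selection. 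The prime-certificate feedback fails for the same reason: the play must be constructed before one knows which prime $q$ witnesses its failure, and since there may be uncountably many primes one cannot diagonalize against all of them in $\omega$ rounds. Turning a global, branch-insensitive application of $\sfin$ into a win along a single branch is precisely the nontrivial content of Hurewicz's theorem, and your proposal does not supply the mechanism.

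The paper's mechanism is a genuinely different (and the missing) idea: a \emph{tail set} lemma. For a nice strategy, let $B_{n+1}=\bigcup_{s\in\omega^n}A_s$ be the union of \emph{all} possible $n$-th moves of $\playerone$. The lemma, proved by induction using enough primes, shows every cofinite $B\subseteq B_{n+1}$ has an infimum and $V_{n+1}=\{\inf B: B\subseteq B_{n+1}\text{ cofinite}\}\in\opV_p$. One then applies $\sfin$ once, globally, to the \emph{derived} covers $(V_1,V_2,\dotsc)$ -- not to the node covers $\sigma(s)$ -- obtaining finite $F_n\subseteq V_n$ with $\sup\bigcup_n F_n=p$. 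Each $f\in F_n$ is $\inf B^n_f$ for some cofinite $B^n_f\subseteq B_{n+1}$, and $C^n=\bigcap_{f\in F_n}B^n_f$ is still cofinite in $B_{n+1}$. This is what buys branch-independence: whatever node $s$ the actual play reaches, $\playerone$'s move there is an infinite subset of $B_{n+1}$, hence meets $C^n$, and any $f_n\in C^n$ satisfies $f_n\geq f$ for every $f\in F_n$, so $\sup_n f_n\geq\sup\bigcup_n F_n=p$. In short: your outline reproduces the paper's reductions but is missing its key lemma, and the gap it leaves is exactly the theorem's real difficulty.
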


A \textbf{prime element} of the lattice $\PP$ is an element $q\in\PP$ (not equal to $1$ if $\PP$ is bounded), such that $a\wedge b\leq q$ implies $a\leq q$ or $b\leq q$ for every $a,b\in \PP$. The \emph{enoughness} means that if $a,b\in\PP$ satisfy $a\not\leq b$, then there exists a prime element $q\in\PP$ such that $b\leq q$ and $a\not\leq q$. These elements shall play the role of the (complement of) points in our adaptation of the arguments of \Sz~and Tsaban.

Let us begin by noticing that if $\sfin(\opV_p,\opV_p)$ holds, then for every $A\in\opV_p$ there exists a countable subset ${B\subseteq A}$ such that $\sup B=p$, i.e., $\PP$ is \emph{Lindelöf} in an order-theoretic sense.~Thus we may restrict $\playertwo$'s moves to countable subsets of $\PP$. On the other hand, since suprema are associative, we may assume $\playerone$'s moves are countable and increasing, in such a way that $\playertwo$ may select a single element per inning.

Indeed, if $\playerone$ plays a subset $A_n\in\opV_p$ in the inning $n$, then there exists a countable subset $\{a^n_m:m\in\omega\}\subseteq A_n$ such that $\sup_{m\in\omega}a_m^n=p$, implying that $B_n=\left\{b_m:m\in\omega\right\}\in\opV_p$ is countable and increasing, where $b_m=a_0^n\vee\dotso\vee a_m^n$ for each $m$. Now, if $\playertwo$ selects $b_{m_n}\in B_n$ for each $n$ such that $\sup_{n\in\omega}b_{m_n}=p$, the associativity of joins yields
\[p=\sup_{n\in\omega}b_{m_n}=\sup_{n\in\omega}\bigcup\left\{a_0^n,\dotso,a^n_{m_n}\right\},\]
showing that there is no loss of generality in our assumption. As a last remark, notice that we may suppose that an answer $A=\{a_n:n\in\omega\}\in\opV_p$ of $\playerone$ to some element $a\in\PP$ selected by $\playertwo$ is such that $a_0=a$, since we can replace $A$ with $\{a\vee a_n:n\in\omega\}\in\opV_p$.

With these assumptions, a strategy for $\playerone$ in the game $\gfin(\opV_p,\opV_p)$ may be identified with a family $\sigma=\{A_s:s\in \omega^{<\omega}\}$ of subsets of $\PP$ such that $A_s\in \opV_p$ for every finite sequence $s\in\omega^{<\omega}$, where $A_{\langle\,\rangle}=\{a_j:j\in\omega\}$ is $\playerone$'s first move, $A_{\langle m\rangle}=\{a_{m,j}:j\in\omega\}$ is $\playerone$'s answer to $a_m$, $A_{\langle m,n\rangle}=\{a_{m,n,j}:j\in\omega\}$ is $\playerone$'s answer to $a_{m,n}$ and so on. Let us say that such a strategy $\sigma$ is a \textbf{nice strategy} for $\playerone$ in the game $\gfin(\opV_p,\opV_p)$.

All the previous simplifications were made by \Sz~and Tsaban~\cite{Tsaban} in the context of open covers in order to reduce the proof of Hurewicz's Theorem to a tricky lemma about tail covers. Recall that a countable open cover $\mU$ of a topological space $X$ is called a \emph{tail cover} \cite{Tsaban} if the set of intersections of cofinite subsets of $\mU$ is an open cover for $X$. Here, the translation is straightforward: a subset $A\subseteq\PP$ is a \textbf{tail set} (with respect to $p\in\PP$) if $\sup A=p$ and for every cofinite subset $B$ of $A$ there exists $\inf B$ such that $\sup\{\inf B:B$ is a cofinite subset of $A\}=p$. The order-theoretic version of their lemma is the following.

\begin{lemma}
Let $\sigma=\{A_s:s\in \omega^{<\omega}\}$ be a nice strategy for $\playerone$ and for $n\in\omega$ let $B_{n+1}=\bigcup_{s\in \omega^n}A_s$. Then $B_{n+1}$ is a tail set with respect to $p$.
\end{lemma}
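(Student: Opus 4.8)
The plan is to verify the two defining requirements of a tail set for $B_{n+1}$ separately: that $\sup B_{n+1}=p$, and that every cofinite $B\subseteq B_{n+1}$ admits an infimum with $\sup\{\inf B: B\text{ cofinite}\}=p$. The first is immediate, since $A_s\subseteq B_{n+1}$ forces $p=\sup A_s\le \sup B_{n+1}\le p$. All the work lies in the second requirement, and the engine driving it is the bookkeeping built into a nice strategy. Writing $a_t$ for the element of $A_{t\upharpoonright(|t|-1)}$ indexed by the last entry of $t$, the niceness assumptions say that $a_{s^\frown\langle j\rangle}$ is non-decreasing in $j$, with $a_{s^\frown\langle 0\rangle}=a_s$ whenever $|s|\ge 1$. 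Iterating the latter I would first record the monotonicity $a_t\ge a_{t\upharpoonright k}$ for $1\le k\le|t|$; in particular every element of $B_{n+1}$ lies above $a_{t_0}$, where the first-coordinate values $a_0\le a_1\le\cdots$ form an increasing chain, and $a_m=\inf C_m$ for the block $C_m=\{a_t: t_0=m\}$ of $B_{n+1}$ sitting under $\langle m\rangle$.

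Next I would prove existence of $\inf B$ for an arbitrary cofinite $B\subseteq B_{n+1}$ by induction on $n$, showing more precisely that $\inf B$ is a finite meet of members of $B$. The base $n=0$ is the increasing-chain computation already used for $B_1$: a cofinite subset of a chain has a least element, which is its infimum. For the inductive step I would use that each block $C_m$ equals $\bigcup_{s\in\omega^{n-1}}A_{\langle m\rangle^\frown s}$, i.e.\ the lower-depth instance of the construction attached to the nice strategy $\{A_{\langle m\rangle^\frown s}\}_s$ obtained by restricting $\sigma$ to the subtree rooted at $\langle m\rangle$, so the inductive hypothesis applies to $C_m$. Since $B$ omits only finitely many points, $B\cap C_m=C_m$ for all but finitely many $m$; for those untouched blocks $\inf(B\cap C_m)=\inf C_m=a_m$, and the untouched indices form a cofinite subset of $\omega$ whose increasing chain $\{a_m\}$ collapses to its least term $a_{m^\ast}$. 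Using the associativity of infima $\inf\bigcup_i D_i=\inf_i\inf D_i$ (valid whenever the inner infima exist), I would then rewrite $\inf B=a_{m^\ast}\wedge\bigwedge_{m\text{ touched}}\inf(B\cap C_m)$, a finite meet of members of $B$, which in particular exists.

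Finally I would establish $\sup\{\inf B: B\text{ cofinite}\}=p$ using the prime elements. Fix a prime $q$ with $p\not\le q$. Arguing by induction on the length of $t$, I would show that $\{t\in\omega^{k}: a_t\le q\}$ is finite for every $k$: at each node the set $\{j: a_{s^\frown\langle j\rangle}\le q\}$ is a proper initial segment, hence finite, because $A_s$ is an increasing chain with $\sup A_s=p\not\le q$, and its first term is $\le q$ exactly when $a_s\le q$, so the normalization $a_{s^\frown\langle 0\rangle}=a_s$ lets the finite set at level $k$ control level $k+1$. Consequently $B_q:=\{a_t\in B_{n+1}: a_t\not\le q\}$ is cofinite; by the previous step $\inf B_q$ exists and is a finite meet of members of $B_q$, hence $\not\le q$ since $q$ is prime. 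Thus $\sup\{\inf B\}\not\le q$ for every prime $q$ with $p\not\le q$, and the enoughness of primes forces $\sup\{\inf B\}=p$.

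The main obstacle I anticipate is the second step: infima of infinite cofinite families need not exist in a bare lattice, and primeness governs only finite meets, so the whole argument hinges on reducing every relevant infimum to a finite meet of surviving elements. The increasing-chain and normalization hypotheses are exactly what make this reduction go through, and keeping the tree bookkeeping honest — especially the collapse of the infinitely many untouched blocks to a single least term — will be the delicate part of the write-up.
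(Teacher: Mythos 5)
Your proof is correct, but it takes a genuinely different route from the paper's. The paper runs a single induction on $n$ within the fixed strategy, with the invariant ``$B_n$ is a tail set,'' and peels off the \emph{last} level: $B_{n+1}$ is grouped into the answer-chains of the elements of $B_n$; existence of $\inf B$ comes from $\inf D$ for a cofinite $D\subseteq B_n$ (inductive hypothesis) met with finitely many stray elements; and the supremum condition is proved by contradiction, using the inductive supremum condition to extract a cofinite $C\subseteq B_n$ with $\inf C\not\leq\tilde{q}$ and then extending it upward, via tails of the answer-chains of the finitely many remaining elements, to a cofinite $E\subseteq B_{n+1}$ with $\inf E\not\leq\tilde{q}$. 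You instead peel off the \emph{root}: $B_{n+1}=\bigcup_m C_m$ with $C_m$ the corresponding set of the restricted strategy rooted at $\langle m\rangle$, so your induction quantifies over all nice strategies rather than over levels of one strategy; you carry the strengthened invariant that $\inf B$ is a finite meet of members of $B$; and your supremum argument bypasses the tail-set inductive hypothesis entirely, relying instead on a separate depth induction showing that $\{t\in\omega^k : a_t\leq q\}$ is finite for each prime $q$ with $p\not\leq q$, whence $B_q=\{a\in B_{n+1}: a\not\leq q\}$ is cofinite and $\inf B_q$, being a finite meet of elements not below $q$, is itself not below $q$ by primeness. What your version buys is transparency about where primeness enters: every infimum in sight is exhibited as a finite meet of surviving elements, which is exactly the scope of a prime element in a bare lattice; the paper achieves the same effect implicitly (its $\inf E$ is $\inf C$ met with a finite set, with $\inf C\not\leq\tilde{q}$ supplied by the inductive hypothesis rather than by a finite-meet decomposition) while keeping one induction with one invariant. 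A shared loose end, inherited from the paper rather than introduced by you: both arguments treat the answer-sets as infinite sets of lattice elements when intersecting them with a cofinite $B$ (an $A_s$ is only $\omega$-indexed and may be finite as a set), so the touched/untouched bookkeeping should strictly be carried out on the index tree; this is cosmetic and affects both proofs equally.
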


Note that $B_{n+1}$ is the union of all possible answers of $\playerone$ in the $n$-th inning. 

\begin{proof} We proceed by induction.

Since $B_1=A_{\langle\,\rangle}$ is countable and increasing, it follows easily that
\[\sup\left\{\inf{B}:B\text{ is a cofinite subset of $A_{\langle\,\rangle}$}\right\}=p,\]
showing that $B_1$ is a tail set. Now, we assume $B_n$ is a tail set for $n>1$. Since every $A_s$ is countable, we may write $B_n=\{b_n:n\in\omega\}$ and then we put
\[B_{n+1}=\bigcup_{j\in\omega}\left\{b^j_n:n\in\omega\right\},\]
where $\{b_m=b_0^m,b_1^m,b_2^m,\dotso\}$ is the (increasing) answer of $\playerone$ to $b_m\in B_n$. We shall show that $B_{n+1}$ is a tail set with respect to $p$.

Let $B\subseteq B_{n+1}$ be a cofinite subset of $B_{n+1}$ and for every $k\in\omega$ consider $m_k=\min\{n:b_n^k\in B\}$. Note that since $B$ is cofinite, $m_k=0$ for all but finitely many $k$, hence $C=\{k:m_k=0\}$ is cofinite. Now, the niceness of strategy $\sigma$ implies that $D=\{b_{m_k}^k:k\in C\}$ is also a cofinite subset of $B_n$, while the induction hypothesis guarantees the existence of $\inf D$. Finally, since $\inf B\cap\{b_n^k:n\in\omega\}=b^k_{m_k}$ for every $k\in\omega$, it follows that
\[\inf B=\inf\bigcup_{k\in\omega}B\cap\left\{b_n^k:n\in\omega\right\}=\inf_{k\in\omega}b_{m_k}^k=\inf D\wedge \bigwedge_{k\not\in C}b_{m_k}^k,\]
which existence follows because $\PP$ is a lattice. Now, we show that the supremum of those cofinal subsets is $p$.

Clearly we have $\inf B\leq p$ for all cofinite subsets $B\subseteq B_{n+1}$. So, we just need to show that if $q\in\PP$ is such that $\inf B\leq q$ for all cofinite subsets $B$ of $B_{n+1}$, then $p\leq q$. Suppose, contrary to our claim, that $p\not\leq q$. Since $\PP$ has enough prime elements, there exists a $\tilde{q}\in\PP$ such that $q\leq \tilde{q}$ and $p\not\leq \tilde{q}$. Now, the inductive hypothesis yields a cofinite subset $C\subseteq B_n$ such that $\inf C\not\leq \tilde{q}$, say $C=\{b_n:n\in J\}$ for a cofinite subset $J\subseteq \omega$. Since $\sup\{b_k^m:k\in\omega\}=p$ for all $m\in\omega$, for each $m\not\in J$ there exists an $n_m\in\omega$ such that $b_{n_m}^m\not\leq \tilde{q}$, from which it follows that $\bigwedge_{m\in\omega\setminus J}b_{n_m}^m\not\leq \tilde{q}$. Since the family
\[E:=\left\{b_m^n:n\in J,m\in\omega\right\}\cup\left\{b_m^k:m\in\omega\setminus J\text{ and }k\geq n_m\right\}\]
is a cofinite subset of $B_{n+1}$ such that $\inf E=\inf C\wedge \bigwedge_{m\in\omega\setminus J}b^m_{n_m}$, the primeness of $\tilde{q}$ gives $\inf E\not\leq \tilde{q}$. Therefore, $\inf E\not\leq q$.
\end{proof}

\begin{proof}[Proof of Theorem~\ref{main.theo1}]
The simplifications made so far allows us to assume $\playerone$ has a nice strategy $\sigma$ such that, with the same notations of the previous lemma, the family $B_{n+1}$ is a tail set for every $n\in\omega$. Consequently, \[V_{n+1}:=\{\inf B:B\subseteq B_{n+1}\text{ is cofinite}\}\in\opV_p.\] Now, $\sfin(\opV_p,\opV_p)$ applies to the sequence $(V_1,V_2,\dotso)$, giving a sequence $(F_1,F_2,\dotso)$ of finite subsets $F_n\subseteq V_n$ such that $\sup\bigcup_{n\in\omega}F_n=p$. Notice that for each $f\in F_n$ there exists a cofinite subset $B_f^n\subseteq B_{n+1}$ such that $f=\inf B_{f}^n$, and $C^n=\bigcap_{f\in F_n}B_f^n$ is a cofinite subset of $B_{n+1}$.

Finally, let us see how $\playertwo$ can defeat the nice strategy $\sigma$: in the $n$-th inning, $\playerone$ selects an infinite subset $A$ of $B_{n+1}$, which must intersect $C^n$, so $\playertwo$ may choose $f_n\in C^n\cap A$. Then 
\[p=\sup\{\inf B^n_f:f\in F_n,n\in\omega\setminus\{0\}\}\leq \sup_{n\in\omega}f_n\leq p,\]
showing that $\playertwo$ wins, as desired.
\end{proof}

\section{The Rothberger game played on nicer lattices}

We now proceed to discuss the adaptation of Pawlikowski's Theorem, which shall require stronger conditions on the lattice $\PP$. For our preliminary considerations, let us suppose that $\PP$ is a bounded lattice with enough prime elements. For simplicity, such lattices will be called \textbf{pre-Pawlikowski}.

As it happens in the original proof of Pawlikowski, we will need to use Hurewicz's theorem to provide nice plays in auxiliary games. At the beginning of the proof, one uses the simple exercise that a countable union of Menger spaces is again a Menger space. This still happens with pre-Pawlikowski lattices when we consider the right generalization of union.

\begin{lemma}
If for every $n\in\omega$ there is a pre-Pawlikowski lattice $\PP_n$ such that $\sfin(\opV_{1_{\PP_n}},\opV_{1_{\PP_n}})$ holds, then $\PP:=\prod_{n\in\omega}\PP_n$ is a pre-Pawlikowski lattice satisfying $\sfin(\opV_{1_\PP},\opV_{1_\PP})$, where $\PP$ is endowed with the pointwise ordering and $1_\PP:=(1_{\PP_n})_{n\in\omega}$.
\end{lemma}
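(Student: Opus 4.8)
The plan is to show that the product lattice $\PP=\prod_{n\in\omega}\PP_n$ inherits both the pre-Pawlikowski structure and the Menger property $\sfin$ from its factors. First I would verify that $\PP$, endowed with the pointwise (coordinatewise) ordering, is again a bounded lattice with enough prime elements. Joins and meets are computed coordinatewise, so the lattice operations are immediate; the top element is $1_\PP=(1_{\PP_n})_n$ and the bottom is $(0_{\PP_n})_n$. The key structural point is \emph{enough prime elements}: given $a=(a_n)_n\not\leq b=(b_n)_n$ in $\PP$, there is some coordinate $k$ with $a_k\not\leq b_k$ in $\PP_k$, so the enoughness of $\PP_k$ yields a prime $q_k\in\PP_k$ with $b_k\leq q_k$ and $a_k\not\leq q_k$. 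I would then lift $q_k$ to the element $\tilde q=(\tilde q_n)_n\in\PP$ defined by $\tilde q_k=q_k$ and $\tilde q_n=1_{\PP_n}$ for $n\neq k$, and check that $\tilde q$ is prime in $\PP$ (a coordinatewise meet lies below $\tilde q$ iff it does in coordinate $k$, where primeness of $q_k$ applies) while $b\leq\tilde q$ and $a\not\leq\tilde q$. This establishes that $\PP$ is pre-Pawlikowski.

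The heart of the lemma is the Menger property. Here the crucial observation is how $\opV_{1_\PP}$ relates to the factor families: a subset $A\subseteq\PP$ satisfies $\sup A=1_\PP$ exactly when, coordinatewise, $\sup\{a_n:a\in A\}=1_{\PP_n}$ in each $\PP_n$. To exploit this, I would use the natural projections $\pi_k\colon\PP\to\PP_k$ together with the sections (inclusions) $\iota_k\colon\PP_k\to\PP$ sending $x$ to the tuple that is $x$ in coordinate $k$ and $1_{\PP_n}$ elsewhere; these are designed so that $\sup$ is preserved. Given a sequence $(A_m)_{m\in\omega}$ with each $A_m\in\opV_{1_\PP}$, I would split the innings among the coordinates using a bijection $\omega\cong\omega\times\omega$, say $m\mapsto(k_m,j_m)$, and in the innings assigned to coordinate $k$ apply the projection to obtain members of $\opV_{1_{\PP_k}}$, to which the hypothesis $\sfin(\opV_{1_{\PP_k}},\opV_{1_{\PP_k}})$ can be applied to extract finite subsets whose joins reach $1_{\PP_k}$.

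The technical obstacle I anticipate is that $\sfin$ in the factor $\PP_k$ consumes a full $\omega$-indexed sequence of covers, so I must feed it the subsequence $(\pi_k[A_m])_{m:\,k_m=k}$ and recover the selected finite subsets back in $\PP$. Concretely, for the innings $m$ with $k_m=k$ I would form $A_m^{(k)}:=\{\pi_k(a):a\in A_m\}\in\opV_{1_{\PP_k}}$, apply $\sfin$ in $\PP_k$ to get finite $F_m^{(k)}\subseteq A_m^{(k)}$ with $\sup\bigcup_m F_m^{(k)}=1_{\PP_k}$, and then lift each chosen $\pi_k(a)$ back to the original $a\in A_m$ (choosing one preimage per element) to form the finite set $F_m\subseteq A_m$. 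The point is then to verify that $\sup\bigcup_{m\in\omega}F_m=1_\PP$ by checking each coordinate: in coordinate $k$ the join of the relevant $F_m$'s dominates $\sup\bigcup_m F_m^{(k)}=1_{\PP_k}$, since the projection of a join is the join of the projections; and in every other coordinate the sections contribute $1_{\PP_n}$. Thus the coordinatewise supremum equals $1_\PP$, which is exactly $\sup\bigcup_m F_m=1_\PP$ in $\PP$. The only subtlety to handle carefully is bookkeeping so that each original inning $A_m$ is used exactly once and the finitely-many selections remain finite; this is routine once the index bijection is fixed.
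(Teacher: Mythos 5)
Your proposal is correct and follows essentially the same route as the paper: the paper partitions $\omega$ into infinitely many infinite sets $Q_n$ (your bijection $\omega\cong\omega\times\omega$ is the same device), dedicates the innings indexed by $Q_n$ to coordinate $n$, applies $\sfin(\opV_{1_{\PP_n}},\opV_{1_{\PP_n}})$ to the projected covers, lifts the selections back to finite subsets of the original $A_j$'s, and verifies the supremum coordinatewise; you additionally spell out the ``enough primes'' verification that the paper leaves to the reader. The only quibble is your remark that ``in every other coordinate the sections contribute $1_{\PP_n}$'' --- the lifted preimages need not be sections at all --- but this is immaterial, since every coordinate is itself assigned infinitely many innings and is therefore covered by the same argument you give for coordinate $k$.
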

\begin{proof}
The reader can easily show that $\PP$ is a pre-Pawlikowski lattice such that $1_\PP$ is its maximum element. Now, for a sequence $(A_n)_{n\in\omega}$ of subsets of $\PP$ such that $\sup A_n=1_\PP$ for each $n\in\omega$, we shall select a sequence $(F_n)_{n\in\omega}$ of subsets such that $F_n\in [A_n]^{<\omega}$ for every $n$ and $\sup\bigcup_{n\in\omega}F_n=1_\PP$.

Let $\{Q_n:n\in\omega\}$ be an infinite partition of $\omega$ such that each $Q_n$ is infinite. For a fixed $n\in\omega$ and $j\in Q_n$ we have $\sup A_j=1_\PP$, implying that $\sup \pi_n[A_j]=1_{\PP_n}$, where $\pi_n\colon \PP\to\PP_n$ is the obvious projection. Then, for each $j\in Q_n$ there exists a finite subset $F_j\subseteq A_j$ such that $\sup \bigcup_{j\in\omega}\pi_n[A_j]=1_{\PP_n}$, from which it easily follows that $\sup\bigcup_{n\in\omega}F_n=1_\PP$.
\end{proof}

With this lemma, we can show that if $\PP$ is a pre-Pawlikowski lattice satisfying $\sfin(\opV_{1},\opV_{1})$, then every strategy of $\playerone$ in the game $\gfin(\opV_1,\opV_1)$ can be severely defeated, in the following sense.

\begin{proposition}\label{strong.lose.prop}
Let $\PP$ be a pre-Pawlikowski lattice and let $\sigma$ be a strategy for $\playerone$ in the game $\gfin(\opV_1,\opV_1)$. If $\sfin(\opV_1,\opV_1)$ holds, then there exists a play $(A_0,F_0,A_1,F_1,\dotso)$ according to $\sigma$ such that for every $p\in\PP\setminus\{1\}$ we have $\sup F_n\not\leq p$ for infinitely many $n$.
\end{proposition}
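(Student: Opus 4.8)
The plan is to recast the conclusion as a statement about \emph{tails} of the play. Writing $c_n:=\sup F_n$ for the join of \playertwo's selection in the $n$-th inning (this exists since each $F_n$ is finite), the desired escape property is equivalent to the assertion that for every $N\in\omega$ the tail $\{c_n:n\ge N\}$ has $1$ as its only upper bound, i.e.\ is again a cover in the sense of $\opV_1$. Indeed, since $\PP$ is bounded, no $p\neq 1$ bounds $\{c_n:n\ge N\}$ precisely when for each such $p$ there is some $n\ge N$ with $c_n\not\le p$, and quantifying over $N$ converts ``some $n\ge N$'' into ``infinitely many $n$''. Thus it suffices to construct a play according to $\sigma$ all of whose tails remain covers.

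I would then normalize $\sigma$ using the simplifications of Section~\ref{main.theo1}'s section: one may assume $\sigma$ is a nice strategy, so that \playerone's moves are countable and increasing and \playertwo selects a single element per inning; then each $F_n$ may be taken to be a singleton with $c_n$ its unique element, and a play is coded by a branch through $\omega^{<\omega}$. The task becomes that of choosing, inning by inning, a single element $c_n$ inside \playerone's current move so that \emph{every} tail $\{c_n:n\ge N\}$ still has supremum $1$.

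The heart of the argument is to run infinitely many defeats of $\sigma$ in parallel, one dedicated to each threshold, and this is exactly what the previous lemma is designed to support. I would fix a partition of $\omega$ into infinitely many infinite blocks $\{Q_k:k\in\omega\}$ and associate to each $k$ a copy of the game whose sole purpose is to keep the selections indexed by $Q_k$ covering. These copies are assembled into a single instance of the Menger selection on the product lattice $\PP^\omega$, which by the previous lemma is again pre-Pawlikowski and satisfies $\sfin(\opV_{1_{\PP^\omega}},\opV_{1_{\PP^\omega}})$; applying Theorem~\ref{main.theo1} to this product shows that \playerone\ has no winning strategy there, so the relevant product-level play induced by $\sigma$ can be defeated. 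The point of passing to the product is precisely that a single defeat there yields, coordinate by coordinate, covering selections for all of the infinitely many threads simultaneously, while only a finite amount is committed in each inning of the real game.

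Reading the product defeat back as one genuine $\sigma$-play on $\PP$ is where the hypotheses of the theorem re-enter, and where I expect the main obstacle to lie. The product argument delivers selections that cover within each block, but to transfer this to every tail I must ensure that discarding the finitely many early innings does not destroy covering; this is exactly the tail-set phenomenon exploited in the Lemma, so I would arrange each block's contribution to be a tail set and invoke the enough-prime-elements hypothesis, as in the proof of Theorem~\ref{main.theo1}: given a putative bound $p\neq 1$, a prime $\tilde q\ge p$ with $1\not\le\tilde q$ certifies, through the primeness of $\tilde q$, that $c_n\not\le p$ for infinitely many $n$. The delicate bookkeeping is to keep the infinitely many parallel threads consistent with the single history seen by $\sigma$ and with a single branch of the play; verifying that this fusion can be carried out so that each resulting tail lands in $\opV_1$ is the crux, the remaining steps being the routine order-theoretic translations already rehearsed in Section~\ref{main.theo1}'s section.
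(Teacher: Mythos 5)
You have the right scaffolding, and it is the paper's own route: pass to $\PP^\omega$, which by the preceding lemma is pre-Pawlikowski and satisfies $\sfin(\opV_{1_\omega},\opV_{1_\omega})$, defeat there a strategy induced by $\sigma$ using Theorem~\ref{main.theo1}, and read the defeat back as a play of $\sigma$. But the actual content of the proof is exactly the step you leave unspecified and yourself flag as ``the crux'': how $\sigma$ induces a strategy $\tilde{\sigma}$ on $\PP^\omega$ so that \emph{one} defeat of $\tilde{\sigma}$ reads back as \emph{one} play of $\sigma$ with the stated property. Your blocks picture (innings in $Q_k$ dedicated to coordinate $k$) does not produce legal moves: every move of $\playerone$ in the product game must have supremum $1_\omega$, hence must cover \emph{every} coordinate in \emph{every} inning, so innings cannot be dedicated to coordinates. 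Moreover, even granting block-wise covering selections, a tail of the play only contains a \emph{cofinite} piece of each block, and cofinite subsets of covers need not cover --- which is why your sketch is forced to fall back on tail sets and primeness, i.e., to re-prove Theorem~\ref{main.theo1} inside this proposition. Finally, the ``fusion of infinitely many parallel $\sigma$-plays'' you worry about is a red herring: in the correct construction there is only ever one play of $\sigma$.

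The missing device is the embedding $\delta_n(a)\in\PP^\omega$ ($a$ in coordinate $n$, $0$ elsewhere). For $A\subseteq\PP$ set $\tilde{A}:=\{\delta_n(a):a\in A,\ n\in\omega\}$; then $\sup A=1$ implies $\sup\tilde{A}=1_\omega$, so $\tilde{\sigma}$ may answer a finite $\tilde{F}\subseteq\tilde{A}$ by projecting it to the finite set $F:=\{a\in A:\delta_n(a)\in\tilde{F}\text{ for some }n\}$ and playing the tilde of $\sigma$'s answer to $F$ --- this is what keeps the product play and the $\PP$-play synchronized with a single history. A single defeat $\sup\bigcup_{n\in\omega}\tilde{F}_n=1_\omega$ then handles all $p$ and all tails simultaneously, with no tail sets and no primes needed at this stage: given $p<1$ and an inning bound $n_0$, the selections $\tilde{F}_0,\dotsc,\tilde{F}_{n_0}$ touch only finitely many coordinates, so one may pick an untouched coordinate $m$ and test against $\tilde{p}_m:=(1,\dotsc,1,p,1,\dotsc)$; since the $m$-th coordinate of $\sup\bigcup_n\tilde{F}_n$ equals $1$, some $\delta_m(a)\in\tilde{F}_{n_1}$ has $a\not\leq p$, and necessarily $n_1>n_0$, whence $\sup F_{n_1}\not\leq p$. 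Iterating gives infinitely many such innings. (Primeness enters only as a hypothesis of Theorem~\ref{main.theo1} applied to $\PP^\omega$; your preliminary reductions to nice strategies and tail covers are unnecessary here.) Without this embedding, the proposal is a plan rather than a proof.
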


\begin{proof}
The previous lemma guarantees that $\PP^\omega$ is a pre-Pawlikowski lattice satisfying $\sfin(\opV_{1_\omega},\opV_{1_\omega})$, where $1_\omega:=(1)_{n\in\omega}$. Now, we use $\sigma$ to cook up a strategy $\tilde{\sigma}$ for $\playerone$ in the game $\gfin(\opV_{1_\omega},\opV_{1_\omega})$ played on $\PP^\omega$. For a subset $A\subseteq \PP$, let $\tilde{A}:= \{\delta_n(a):a\in A$ and $n\in\omega\}$, where $\delta_n(a)\in\PP^\omega$ is such that $\delta_n(a)(m):=0$ for $m\ne n$ and $\delta_n(a)(n)=a$. It is easy to see that if $\sup A=1$, then $\sup\tilde{A}=1_\omega$.

We define $\tilde{\sigma}(\emptyset):=\tilde{A_0}$, where $\sigma(\emptyset):=A_0$. If $\playertwo$ selects a finite subset $\tilde{F_0}\subseteq \tilde{A_0}$, let $F_0:=\{a\in A_0:\delta_n(a)\in \tilde{F_0}$ for some $n\in\omega\}$, a finite subset of $A_0$, and then define $\tilde{\sigma}(\tilde{F_0}):=\tilde{A_1}$, where $A_1:=\sigma(F_0)$. Proceeding like this, we obtain a strategy $\tilde{\sigma}$ for $\playerone$ in the game $\gfin(\opV_{1_\omega},\opV_{1_\omega})$ played on $\PP^\omega$.

The lattice version of Hurewicz's theorem yields a play $(\tilde{A_0},\tilde{F_0},\tilde{A_1},\tilde{F_1},\dotso)$ according to $\tilde{\sigma}$ such that $\sup\bigcup_{n\in\omega}\tilde{F_n}=1_\omega$, and by the way we defined $\tilde{\sigma}$, this play corresponds to a play $(A_0,F_0,A_1,F_1,\dotso)$ according to $\sigma$. We claim this latter play has the desired property. Indeed, for a fixed $p\in\PP\setminus\{1\}$, we have $\tilde{p}_0:=(p,1,1,\dotso){<}\,1_\omega$, from which it follows that there exists $n_0\in\omega$ and $f:=(f_n)_{n\in\omega}\in\tilde{F}_{n_0}$ such that $f\not\leq \tilde{p}_{0}$, implying $f_0\in F_{n_0}$ is such that $f_0\not\leq p$, hence $\sup F_{n_0}\not\leq p$. Since $F:=\bigcup_{j\leq n_0}F_j$ is finite, the set $N:=\{n:\delta_n(a)\in F$ for some $a\}$ is also finite, which allow us to take $\tilde{p}_{m}:=(1,\dotso,1,p,1,\dotso)$, with $p$ in the $m$-th position, for $m\not\in N$. Once again, there exists $n_1\geq m>n_0$ such that $\sup F_{n_1}\not\leq p$. Continuing like this, we see that $\sup F_n\not\leq p$ for infinitely many $n$.
\end{proof}

In the argument presented by \Sz~and Tsaban~\cite{Tsaban}, the topological counterpart of Proposition~\ref{strong.lose.prop} is used to obtain a play $(\mU_0,F_0,\mU_1,F_1,\dotso)$ of an auxiliary instance of the Menger game such that for every $x$ one has $x\in \bigcup F_n$ for infinitely many $n$. This allows one to select $U_m\in F_m$ for each $m\in\omega$ such that the family $\{U_m:m\in\omega\}$ is an open cover for $X$. As it happens with the previous propositions, this argument also has a lattice counterpart.

\begin{lemma}
Let $\PP$ be a pre-Pawlikowski lattice satisfying $\sone(\opV_1,\opV_1)$. Let $(F_n)_{n\in\omega}$ be a sequence of finite subsets of $\PP$ such that for every $p\in\PP\setminus\{1\}$ there are infinitely many $n$ such that $\sup F_n\not\leq p$. Then there are elements $f_n\in F_n$ for every $n$ such that $\sup_{n\in\omega}f_n=1$.
\end{lemma}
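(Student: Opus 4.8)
The plan is to transport the familiar topological argument into the lattice setting: the ``infinitely often'' hypothesis makes the tails into elements of $\opV_1$, one applies $\sone$ to them, and the only genuine work is to descend from a covering selection back to a \emph{single} representative of each $F_n$.

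First I would reduce the target $\sup_n f_n = 1$ to a statement about primes: since $\PP$ has enough primes, $\sup_n f_n = 1$ holds as soon as for every prime $q\neq 1$ there is some $n$ with $f_n\not\le q$. Dually, writing $G_q := \{n : f\not\le q \text{ for some } f\in F_n\}$, the hypothesis gives that $G_q$ is infinite for every prime $q\neq 1$ (note $n\in G_q$ precisely when $\bigvee F_n\not\le q$). From this I would verify that each tail $\mathcal{W}_m := \bigcup_{n\ge m} F_n$ lies in $\opV_1$: if $\sup\mathcal{W}_m\le q$ then $\bigvee F_n\le q$ for all $n\ge m$, contradicting the infinitude of $G_q$, so $\sup\mathcal{W}_m\not\le q$ for every prime $q\neq 1$ and enough primes forces $\sup\mathcal{W}_m = 1$.

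Next I would apply $\sone(\opV_1,\opV_1)$ to the sequence $(\mathcal{W}_m)_{m\in\omega}$, obtaining $w_m\in\mathcal{W}_m$ with $\sup_m w_m = 1$. By construction each $w_m$ lies in some $F_{n_m}$ with $n_m\ge m$, so $m\mapsto n_m$ is finite-to-one. The naive move is to set $f_{n_m} := w_m$ and to fill the unused indices arbitrarily; this does produce one element of every $F_n$. It is here, however, that I expect the main obstacle: distinct $m\neq m'$ may satisfy $n_m = n_{m'}$ with $w_m\neq w_{m'}$, so at such an index only one of these two elements of $F_{n_m}$ can survive, and the supremum of the family actually retained can drop below $1$ even though $\sup_m w_m = 1$. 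Because a $w_m$ is an element of $F_{n_m}$, it can only be assigned to the index $n_m$, so there is no room to reroute a discarded witness, and the collision cannot be absorbed at the level of a single covering selection.

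To get around this I would not read the representatives off one covering selection, but instead run $\sone$ on the finite-join tails $\mathcal{U}_m := \{\bigvee_{n\in a} f_n : a\subseteq[m,\infty) \text{ finite}, \ f_n\in F_n\}$, which lie in $\opV_1$ for the same reason as before. A selected $u_m = \bigvee_{n\in a_m} f^{(m)}_n$ now carries a finite block $a_m\subseteq[m,\infty)$ together with a choice $f^{(m)}_n\in F_n$ on that block, so that passing to a subsequence along which the blocks $a_m$ are pairwise disjoint would let me define $f_n := f^{(m)}_n$ unambiguously on $\bigcup_m a_m$ and arbitrarily elsewhere, giving exactly one representative of each $F_n$. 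The delicate point—where I expect the ``infinitely often'' hypothesis to be spent a second time—is to perform this disjointification without losing coverage, that is, to ensure the retained blocks still join to $1$; I anticipate this must exploit the redundancy that every prime is escaped by infinitely many of the $F_n$, so that a witness dropped at one index can be recovered at a later, still-unused one.
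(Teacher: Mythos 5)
Your setup is sound as far as it goes: the tails $\mathcal{W}_m=\bigcup_{n\ge m}F_n$ do lie in $\opV_1$ by the enough-primes argument, and your diagnosis of the collision problem with the naive selection is exactly right. But the proof is not complete, and the fix you propose does not work. The step you flag as ``delicate''---disjointifying the blocks $a_m$ without losing coverage---is precisely the missing argument, and the mechanism you hope will supply it is not available. Once $\sone$ has been applied to the join-tails $\mathcal{U}_m$, the selections $u_m=\bigvee_{n\in a_m}f^{(m)}_n$ are fixed; passing to a subsequence with pairwise disjoint blocks discards the other $u_m$'s, and nothing guarantees the surviving joins still have supremum $1$. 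The ``infinitely often'' hypothesis cannot repair this: it provides redundancy for the original sets $F_n$, not for the particular witnesses $f^{(m)}_n$ that $\sone$ happened to select, and you have no second selection principle to apply to what is now a single fixed family rather than a sequence of members of $\opV_1$. The structural defect is that joins point the wrong way: since each constituent satisfies $f^{(m)}_n\le u_m$, extracting coverage from $u_m$ requires retaining \emph{all} of its constituents, so a collision between two blocks genuinely destroys coverage and leaves you no freedom to resolve it.

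The paper's proof resolves the collision problem by using meets instead of joins, with blocks of growing size. For each $n$, let $\mV_n$ be the set of meets $\bigwedge G$, where $G$ consists of one element from each of $n+1$ pairwise distinct sets $F_j$. Primeness shows $\sup\mV_n=1$: given $p<1$, take a prime $\tilde{p}$ with $p\le\tilde{p}$ and $\tilde{p}\ne 1$; the hypothesis gives $n+1$ indices $j$ and elements $f_j\in F_j$ with $f_j\not\le\tilde{p}$, and then $\bigwedge_j f_j\not\le\tilde{p}$ because $\tilde{p}$ is prime. Applying $\sone$ to $(\mV_n)_{n\in\omega}$ gives $v_n\in\mV_n$ with $\sup_n v_n=1$, and now both defects of your approach vanish simultaneously: since $v_k$ lies \emph{below} each of its constituents, keeping any \emph{single} one of them preserves its contribution to the supremum; and since $v_k$ has constituents in $k+1$ distinct sets $F_j$ while only $k$ indices $n_0,\dotsc,n_{k-1}$ have been used before stage $k$, the pigeonhole principle always provides a fresh index $n_k$ and an element $f_{n_k}\in F_{n_k}$ with $v_k\le f_{n_k}$. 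Then $\sup_k f_{n_k}\ge\sup_k v_k=1$, and the remaining $f_n$ may be chosen arbitrarily. In short: meets give the freedom to keep any one representative per selected element, while joins force you to keep all of them; that reversal is the idea your proposal is missing, and it is also where the primeness hypothesis is actually spent.
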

\begin{proof}
The proof is the same as in \cite{Tsaban}, up to terminology. For each $n\in\omega$ let
\[\mV_n:=\left\{\bigwedge G:\exists J\in[\omega]^{n+1}\,\forall j\in J\, |G\cap F_j|=1\text{ and }|G|\leq n+1\right\},\]
i.e., the elements of $\mV_n$ are the meets of $n+1$ elements belonging to $n+1$ pairwise distinct sets of the sequence $(F_j)_{j\in\omega}$. The hypothesis about the sequence $(F_n)_{n\in\omega}$ implies that $\sup \mV_n=1$ for every $n\in\omega$: for if $p<1$, there exists a prime element $\tilde{p}<1$ such that $p\leq \tilde{p}$, while the hypothesis gives infinitely many $m$ such that $\sup F_m\not\leq \tilde{p}$; then we may take $J\in[\omega]^{n+1}$ and $f_j\in F_j$ for each $j\in J$ such that $f_j\not\leq \tilde{p}$; since $\tilde{p}$ is prime, it follows that $\bigwedge_{j\in J}f_j\not\leq \tilde{p}$, hence $\bigwedge_{j\in J}f_j\not\leq p$. Therefore, $1\in\PP$ is the only upper bound of $\mV_n$.

Now, since $\sone(\opV_1,\opV_1)$ holds, there exists a sequence $(v_n)_{n\in\omega}$ with $v_n\in\mV_n$ for each $n$, such that $\sup_{n\in\omega}v_n=1$. Notice that $v_0\leq f_{n_0}$ for some $f_{n_0}\in F_{n_0}$, while $v_1\leq f_{n_1}$ for some $f_{n_1}\in F_{n_1}$ such that $n_1\ne n_2$, and so on. Proceeding like this, we obtain $f_{n_k}\in F_{n_k}$ such that $\sup_{k\in\omega}f_{n_k}=1$, so we can pick arbitrary $f_n\in F_n$ for the remaining $n$'s and still get $\sup_{n\in\omega}f_n=1$.
\end{proof}

We finally proceed to the order-theoretic version of Pawlikowski's theorem. However, we must impose a last condition over our lattices. Let us say that $\PP$ is a \textbf{Pawlikowski} lattice if $(\sup A)\wedge b=\sup\{a\wedge b:a\in A\}$ holds for every $b\in \PP$ and every subset $A\subseteq \PP$ having a supreme. This is the order-theoretic version of the identity
\[\left(\bigcup \mU\right)\cap B=\bigcup_{U\in \mU}U\cap B,\]
holding for (open) sets of a topological space $X$.
The distributivity over arbitrary suprema is needed to guarantee that for every $A,B\in\opV_1$, the family $A\wedge B:= \{a\wedge b:a\in A\text{ and }b\in B\}$ is still a member of $\opV_1$, i.e., $\sup A\wedge B=1$.

\begin{theorem}\label{main.theorem2}
Let $\PP$ be a Pawlikowski lattice satisfying $\sone(\opV_1,\opV_1)$. Then $\playerone$ does not have a winning strategy in the game $\gone(\opV_1,\opV_1)$.
\end{theorem}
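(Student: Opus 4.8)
The plan is to feed the three preliminary results into one another through an auxiliary Menger game whose finite moves run finitely many branches of $\sigma$ in parallel. So fix a strategy $\sigma$ for $\playerone$ in $\gone(\opV_1,\opV_1)$; the goal is to exhibit a single play according to $\sigma$ that $\playertwo$ wins. Arrange the positions of $\sigma$ as a tree $T$ whose nodes are the legal sequences $s=\langle b_0,\dots,b_{n-1}\rangle$ of $\playertwo$'s past choices; write $A_s:=\sigma(s)\in\opV_1$ for $\playerone$'s reply at $s$, and let the children of $s$ be indexed by the elements of $A_s$. Since $\sone(\opV_1,\opV_1)$ implies $\sfin(\opV_1,\opV_1)$, the (Pawlikowski, hence pre-Pawlikowski) lattice $\PP$ satisfies the Menger selection principle, so Proposition~\ref{strong.lose.prop} will be available.

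Next I would build a strategy $\tilde\sigma$ for $\playerone$ in $\gfin(\opV_1,\opV_1)$ on $\PP$ that explores $T$ level by level. Keep a finite set $S_n$ of level-$n$ nodes, starting from $S_0=\{\langle\,\rangle\}$, and let $\tilde\sigma$ play at inning $n$ the meet-cover $\mathcal B_n:=\bigwedge_{s\in S_n}A_s=\{\bigwedge_{s\in S_n}c(s):c(s)\in A_s\ \text{for each}\ s\}$. This is the one spot where the full Pawlikowski hypothesis is used: distributivity over arbitrary suprema gives, by induction on the finite set $S_n$, that $\mathcal B_n\in\opV_1$, so $\tilde\sigma$ is legal. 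When $\playertwo$ answers with a finite $F_n\subseteq\mathcal B_n$, fix for each $g\in F_n$ a representation $g=\bigwedge_{s\in S_n}c_g(s)$ and set $S_{n+1}:=\{s^{\frown}c_g(s):s\in S_n,\ g\in F_n\}$, again finite; this data determines $\tilde\sigma$'s reply. (If some answer is empty one may harmlessly enlarge it to a singleton, so assume every $F_n\ne\varnothing$.)

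Now apply Proposition~\ref{strong.lose.prop} to $\tilde\sigma$ to get a play $(\mathcal B_0,F_0,\mathcal B_1,F_1,\dots)$ according to $\tilde\sigma$ with $\sup F_n\not\leq p$ for infinitely many $n$, for every $p\in\PP\setminus\{1\}$. This is precisely the hypothesis of the lemma preceding this theorem, which then delivers elements $f_n\in F_n$ with $\sup_{n\in\omega}f_n=1$ (in particular each $F_n\ne\varnothing$).

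Finally I would thread a single branch of $T$ out of the $f_n$. Each $f_n$ comes with its fixed representation $f_n=\bigwedge_{s\in S_n}c_n(s)$. Put $x^{(0)}:=\langle\,\rangle\in S_0$ and inductively $x^{(n+1)}:=x^{(n)}{}^{\frown}c_n(x^{(n)})$. Since $f_n\in F_n$ contributes to $S_{n+1}$ exactly the child $x^{(n)}{}^{\frown}c_n(x^{(n)})$ of the active node $x^{(n)}$, an immediate induction shows $x^{(n)}\in S_n$ for every $n$, so the recursion never stalls. Along the branch $x=\bigcup_n x^{(n)}$, $\playertwo$'s move at inning $n$ is $c_n(x^{(n)})\in A_{x^{(n)}}=\sigma(x^{(n)})$, a legal answer to $\sigma$, and $f_n=\bigwedge_{s\in S_n}c_n(s)\leq c_n(x^{(n)})$ forces $\sup_n c_n(x^{(n)})\geq\sup_n f_n=1$. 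Thus this play follows $\sigma$ and is won by $\playertwo$, so $\sigma$ is not winning; as $\sigma$ was arbitrary, $\playerone$ has no winning strategy. I expect the genuine difficulty to be exactly this threading step: the elements $f_n$ produced by the lemma a priori sit on different branches of $T$, and it is the meet-cover design of $\tilde\sigma$—and hence distributivity—that forces one fixed branch to lie simultaneously below every $f_n$.
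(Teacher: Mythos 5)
Your proposal is correct and is essentially the paper's own approach: the paper's proof of Theorem~\ref{main.theorem2} simply defers to the argument of Szewczak and Tsaban, and what you wrote is exactly the order-theoretic rendering of that argument the author intends, combining the meet-cover observation (the stated purpose of the Pawlikowski distributivity condition), Proposition~\ref{strong.lose.prop} applied to the auxiliary $\gfin$-strategy $\tilde\sigma$, and the selection lemma preceding the theorem, followed by the branch-threading step. The one technicality you rightly flag---empty answers by $\playertwo$---is handled harmlessly by your singleton-enlargement convention, so the write-up fills in precisely the details the paper leaves to the reader.
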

\begin{proof} The hypothesis over $\PP$ allows one to follow the same arguments of {\Sz} {and} Tsaban~\cite{Tsaban} by simply applying the correspondent order-theoretic versions of the necessary lemmas. The details are left to the reader.
\end{proof}

\section{Further comments}

Although the adaptations of \Sz~and Tsaban simplifications do not require explicit use of points, one could ask how the further assumptions on Pawlikowski lattices affect their proximity to topological spaces. Recall that a lattice $\PP$ is \emph{spatial} if there exists a topological space $(X,\tau)$ such that $\PP$ is a frame isomorphic to $\tau$ in the frame category\footnote{The arrows are the increasing maps preserving arbitrary joins and finite meets, thus including $0$ and $1$.}, where by a frame we mean a complete lattice satisfying the distributive condition of Pawlikowski lattices. Thus, the question rephrases as: is there a non-spatial Pawlikowski lattice? The answer is yes.

Indeed, as implicitly\footnote{It was \emph{pointed} out by Eric Wofsey in this answer:~\url{https://math.stackexchange.com/a/2606878/128988}.} showed by Johnstone in his classic book \emph{Stone Spaces}~\cite{Johnstone}, a lattice $\PP$ is spatial if, and only if, $\PP$ is a \emph{complete} lattice with enough prime elements. On the other hand, our assumption regarding distributivity and prime elements do not yield completeness, as the following example of Eric Worsey shows\footnote{See: \url{https://math.stackexchange.com/a/4019499/128988}.}.

\begin{example}
For an infinite set $X$, let $L:=\{A\subseteq X:A$ is finite or $A$ is cofinite$\}$. One can easily see that $L$ is a non-complete lattice with respect to inclusion, with enough prime elements (the complementary of singletons) and such that \[(\sup \mathcal{A})\cap B=\sup_{A\in \mathcal{A}}(A\cap B)\] for every $B\in L$ and $\mathcal{A}\subseteq L$ with $\sup\mathcal{A}=\bigcup \mathcal{A}\in L$. In other words, $L$ is a non-complete Pawlikowski lattice.
\end{example}

In this sense, the results obtained along this work show that, although a few aspects of points (disguised as prime elements) seem to be necessary to prove Hurewicz's and Pawlikowski's theorems, one does not need the full power of a topology to do so. Therefore, assuming other problems regarding topological games can be rephrased in the lattice language, it is natural to ask the following.

\begin{question}
Do Theorems~\ref{main.theo1} and \ref{main.theorem2} hold for larger classes of lattices?
\end{question}

While the last question implicitly aims for possible applications, another direction could be to search for generalizations of known results in the field. In this case, we mention the following theorem, which extends Lemma 3.12 from García-Ferreira and Tamariz-Mascarúa~\cite{garcia95}.

\begin{theorem}
For a Pawlikowski lattice $\PP$ and a function $f\colon \omega\to\omega$, the following are equivalent:
\begin{enumerate}
\item $\sone(\opV_1,\opV_1)$ holds;
\item for each sequence $(A_n)_{n\in\omega}$ where $A_n\in\opV_1$ for every $n$, there are finite subsets $B_n\in [A_n]^{<f(n)+1}$ such that $\bigcup_{n\in\omega}B_n\in \opV_1$.
\end{enumerate}
\end{theorem}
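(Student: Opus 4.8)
The plan is to prove the two implications separately, the first being essentially immediate and the second carrying all the weight. For $(1)\Rightarrow(2)$, assuming (as we may) that $f(n)\geq 1$ for every $n$, I would simply apply $\sone(\opV_1,\opV_1)$ to the sequence $(A_n)_{n\in\omega}$ to obtain elements $a_n\in A_n$ with $\sup_{n\in\omega}a_n=1$, and then set $B_n:=\{a_n\}$; since $|B_n|=1\leq f(n)$ we have $B_n\in[A_n]^{<f(n)+1}$ and $\bigcup_{n\in\omega}B_n=\{a_n:n\in\omega\}\in\opV_1$. (If $f$ vanishes at some indices but is positive at infinitely many of them, the same argument applied to the corresponding subsequence works, setting $B_n:=\emptyset$ whenever $f(n)=0$.)

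The substantial direction is $(2)\Rightarrow(1)$, and here the idea is to trade the freedom of choosing up to $f(k)$ elements for the rigidity of choosing a single one, by spreading a block of innings across one meet-cover. Given a sequence $(A_n)_{n\in\omega}$ with each $A_n\in\opV_1$, I would first fix a partition $\omega=\bigsqcup_{k\in\omega}I_k$ into consecutive finite intervals with $|I_k|\geq\max\{1,f(k)\}$ for every $k$. For each $k$ I then form the block meet-cover $D_k:=\bigwedge_{n\in I_k}A_n=\{\bigwedge_{n\in I_k}a_n:a_n\in A_n\}$. Since $\PP$ is a Pawlikowski lattice, the distributivity identity guarantees $A\wedge B\in\opV_1$ whenever $A,B\in\opV_1$, so a finite induction over the block $I_k$ gives $D_k\in\opV_1$. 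Applying hypothesis (2) to the sequence $(D_k)_{k\in\omega}$ yields finite subsets $B_k\in[D_k]^{<f(k)+1}$ with $\sup\bigcup_{k\in\omega}B_k=1$.

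It remains to convert each $B_k$ into a single selection from the original covers, and this is the crux. Each $w\in B_k$ has the form $w=\bigwedge_{n\in I_k}a^w_n$ with $a^w_n\in A_n$, whence $w\leq a^w_n$ for every $n\in I_k$. Because $|B_k|\leq f(k)\leq|I_k|$, I can assign to each $w\in B_k$ a distinct coordinate $n(w)\in I_k$ and declare $a_{n(w)}:=a^w_{n(w)}$; for the finitely many remaining indices of $I_k$ I pick an arbitrary element of the corresponding $A_n$. This defines $a_n\in A_n$ for every $n\in\omega$ (each $n$ lying in a unique block), and for every $w\in B_k$ we get $w\leq a^w_{n(w)}=a_{n(w)}\leq\sup_{n\in\omega}a_n$. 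Hence $\sup_{n\in\omega}a_n$ is an upper bound of $\bigcup_{k\in\omega}B_k$, so $1=\sup\bigcup_{k\in\omega}B_k\leq\sup_{n\in\omega}a_n=1$, and $(a_n)_{n\in\omega}$ witnesses $\sone(\opV_1,\opV_1)$.

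The only delicate points, and where I would concentrate the care, are two. The first is verifying that the block meet-covers $D_k$ remain in $\opV_1$, which is precisely what the Pawlikowski distributivity is there to provide. The second is the counting that makes the final assignment possible: choosing the blocks large enough ($|I_k|\geq f(k)$) so that the at most $f(k)$ meets in $B_k$ can be charged to pairwise distinct coordinates of $I_k$. Everything else is the routine bookkeeping of a single-valued choice that dominates each block-meet, and I expect no further obstacle there.
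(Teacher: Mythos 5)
Your proof is correct, and there is in fact nothing in the paper to compare it against: the theorem is stated in the closing ``Further comments'' section without any proof, as an extension of Lemma~3.12 of Garc\'{\i}a-Ferreira and Tamariz-Mascar\'ua, so your proposal supplies exactly the missing content. Your route is the natural lattice translation of the classical blocking argument, and almost certainly what the author intended: the easy direction takes singletons, and for the converse you group the innings into consecutive blocks $I_k$ with $|I_k|\geq\max\{1,f(k)\}$, replace each block by the meet-cover $D_k=\bigwedge_{n\in I_k}A_n$ (which stays in $\opV_1$ exactly because of the Pawlikowski distributivity, as the paper itself observes just before Theorem~\ref{main.theorem2}), apply~(2) to $(D_k)_{k\in\omega}$, and charge the at most $f(k)$ meets in $B_k$ to pairwise distinct coordinates of $I_k$.

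Three points deserve tightening. First, in a non-complete lattice you should not write $\sup_{n\in\omega}a_n$ before knowing that it exists; argue instead that every upper bound $u$ of $\{a_n:n\in\omega\}$ is an upper bound of $\bigcup_{k\in\omega}B_k$ (each $w\in B_k$ satisfies $w\leq a_{n(w)}$), hence $u\geq\sup\bigcup_{k\in\omega}B_k=1$; since $1$ is itself an upper bound, $\sup_{n\in\omega}a_n$ exists and equals $1$. The fix is immediate, but existence of the supremum is part of what $\sone(\opV_1,\opV_1)$ asserts. Second, your caveat about $f$ is not cosmetic: as literally stated the theorem is false whenever $f$ is eventually zero and $\PP$ is nontrivial (take $\PP$ the power set of a countably infinite discrete space and every $A_n$ the cover by singletons: then (1) holds but (2) fails), so the standing assumption that $f(n)\geq 1$ for infinitely many $n$, which your parenthetical silently adopts, is genuinely needed and should be made explicit as a hypothesis. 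Third, it is worth noting that your argument never uses prime elements --- boundedness plus the distributive law suffice --- so you have actually proved the equivalence for a class of lattices strictly larger than the Pawlikowski ones, which is a small but real contribution toward the paper's closing Question.
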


As a final remark, we notice that Hurewicz property $\mathsf{U}_{\operatorname{fin}}(\mO,\Gamma)$ can also be stated for lattices: we may say that a bounded lattice $\PP$ is \emph{Hurewicz} if for every sequence $(A_n)_{n\in\omega}\in (\opV_1)^\omega$ there are finite subsets $F_n\subseteq A_n$ for each $n$ such that 
\[\bigvee\left\{\bigvee F_n:n\in\omega\right\}=1\]
with $\left\{n:\bigvee F_n\not\leq p\right\}$ cofinite for every prime element $p\in\PP$. The translation of this property to the lattice context suggests even further investigations. 

\section*{Acknowledgements}

I wish to express my gratitude to Dione Andrade Lara, Rodrigo ``Rockdays'' Dias and Vinicius Rodrigues for the fruitful discussions about the results presented here.



\bibliography{ccc}{}
\bibliographystyle{abbrv}

\end{document}